\pgfplotsset{compat=1.6}
\pgfplotsset{every axis title/.append style={at={(0.6,1.1)}}}
\definecolor{brightlavender}{rgb}{0.75, 0.58, 0.89}
\definecolor{amethyst}{rgb}{0.6, 0.4, 0.8} 	
\definecolor{blue-violet}{rgb}{0.54, 0.17, 0.89} 	
\definecolor{burgundy}{rgb}{0.5, 0.0, 0.13} 	
\definecolor{burntorange}{rgb}{0.93, 0.53, 0.18} 	
\definecolor{earthyellow}{rgb}{0.88, 0.66, 0.37}
\definecolor{applegreen}{rgb}{0.55, 0.71, 0.0}	
\definecolor{antiquefuchsia}{rgb}{0.57, 0.36, 0.51}
\definecolor{ultramarine}{rgb}{0.07, 0.04, 0.56}
\numberwithin{equation}{section}
\newtheorem{theorem}{Theorem}[section]
\newtheorem{proposition}[theorem]{Proposition}
\newtheorem{lemma}[theorem]{Lemma}
\newtheorem{definition}[theorem]{Definition}
\begin{document}

\title[MCMC sampling of densest $k$-subgraphs]{An MCMC sampling of densest $k$-sub-graphs of regular graphs with connected complement graph}

\author[J. W. Fischer]{\textbf{\quad {Jens Walter} Fischer$^{\spadesuit\clubsuit}$ \, \, }}
\address{{\bf {Jens Walter} FISCHER},\\ Institut de Math\'ematiques de Toulouse. CNRS UMR 5219. \\
Universit\'e Paul Sabatier
\\ 118 route
de Narbonne, F-31062 Toulouse cedex 09.} \email{jens.fischer@math.univ-toulouse.fr}

\begin{abstract}
	We present an exclusion process based approach for sampling densest $k$-sub-graphs from regular graphs $L$ with connected complement. By interpreting an exclusion process as a Markov chain on a corresponding Token Graph $\mathfrak{L}_k$, we make use of classical Markov chain theory to obtain quantitative bounds on the convergence speed depending on the geometry of $L$, which are sharp in the sens that variations in the valency lead to an equality, which is $1$, for the geometric bounds in the case of a complete graph. We propose an algorithm which makes use of this particle view to avoid excessive memory use due to the state space size and discuss the regularity and connectivity condition on $L$ and $L^c$ in the Outlook.
\end{abstract}
\bigskip

\maketitle

\textsc{$^{\spadesuit}$  Universit\'e de Toulouse}
\smallskip

\textsc{$^{\clubsuit}$ University of Potsdam}
\smallskip

\textit{ Key words : Densest $k$ Sub-graphs, Exclusion Processes, Token Graphs, MCMC Sampling}  
\bigskip

\textit{ MSC 2020 : 82C22, 60K35, 05C60, 65C05, 82M31}

\section{Introduction}	
	In this work we use generalized exclusion processes to approach the task of finding $k$-densest sub-graphs in a $\bar{d}$-regular graph $L$ of arbitrary size. This is, in particular, linked to the community detection problem which has received wide attention in modern research, for example in \cite{Newman2002} and \cite{Fortunato2010}. The density of any vertex induced sub-graph $L_{\mathfrak{v}}=(\mathfrak{v},E_{\mathfrak{v}})$ of some graph $L=(V,E)$ on the subset $\mathfrak{v}\subseteq V$ is defined as $\rho(\mathfrak{v})=\frac{|E_{\mathfrak{v}}|}{|\mathfrak{v}|}$, see for example \cite{Charikar2000}. The problem of finding the densest sub-graph becomes, then, the maximization problem which aims at finding the subset $\mathfrak{v}^{\ast}\subset V$ such that $\rho(\mathfrak{v}^{\ast})=\max_{\mathfrak{v}\in \mathcal{P}(V),\; |\mathfrak{v}|\geq 1}\rho(\mathfrak{v})$ where $\mathcal{P}(V)$ is the power set of $V$. In \cite{Charikar2000} the author shows that this problem can be solved in polynomial time. Fixing the size of the sub-graph renders the problem vastly more complicated as being discussed in, among many others, \cite{Corneil1984}, \cite{Feige2001} and \cite{Khuller2009}. In particular, the work in \cite{Khuller2009} and \cite{Feige2001}, the authors conclude, that finding densest sub-graphs of fixed size is NP-hard and finding such graphs is, therefore, unfeasible. Hence, the research focus is on approximations of such objects using combinatorial selection methods or related families of problems which, reliably, give results which are close in density to the desired sub-graph.\par
We contribute to this discussion a stochastic approach based on generalized exclusion processes and Monte Carlo simulation. The process is constructed on the complement graph $L^c$ in such a way, that it exhibits in the long time limit most likely to a densest sub-graph in $L$ and, even, gives monotonously sub-graphs of lesser density, i.e., a second densest sub-graph is obtained with second largest probability, a third densest with third largest probability and so on. We obtain a quantitative bound, which is tight with respect to variations in the average degree $\bar{d}$, on the convergence speed based on a binary geometric condition on $L$.
\section{An appropriate GEP}
	To identify densest sub-graphs we want to construct a particle system based on discrete time generalized exclusion process (GEP) $\eta_k$ using the following Definition \ref{def:generalized_exclusion_process}. 
\begin{definition}\label{def:generalized_exclusion_process}
	Let $L = (V,E)$ be a simple connected graph with $|V|=\bar{n}\in\mathbb{N}$ and $k\in\{1,\hdots,\bar{n}-1\}$. Denote by $(P^{(\eta)})_{\eta\in \{0,1\}^V,\;|\eta|=k}$ a family of stochastic matrices. A GEP in discrete time $\eta_k:=(\eta_{k;t})_{t\in\mathbb{N}}$ of $k$ particles on $L$ is a Markov chain on the set of configurations 
	\begin{equation*}
		\mathcal{S}_k=\{\eta\in\{0,1\}^V|\;|\eta|=k\}
	\end{equation*} 
	defined by the transition matrix $Q=(q_{\eta,\mu})_{\eta,\mu\in \{0,1\}^V}$ given for $\eta,\mu\in\{0,1\}^V$ by
	\begin{equation}
		q_{\eta,\mu} = \begin{cases} P^{(\eta)}(v,w)\mathbbm{1}_{\substack{\eta(v)=1=\mu(w),\eta(w)=0=\mu(v)\\ \eta(u)=\mu(v) \forall u\not\in \{v,w\}}},& \eta\neq\mu \vspace{5pt}\\
		1- \sum_{\mu \neq \eta} q_{\eta,\mu} ,&\eta = \mu.
					   \end{cases} 
	\end{equation}         
\end{definition}
We are going to exploit a link between a system of moving particles $\eta_k$ on a finite graph and a Markov chain in a higher dimensional space, as developed in \cite{Fischer2022}, to have access to classical results on the long time behavior of the process. In particular, we consider discrete time generalized exclusion processes, choosing the transition probabilities according to the application at hand. This gives us access to a process which respects at any time step the fixed size of the subgraphs in which we are interested. We translate GEPs as defined hereinabove to Markov chains on subsets of fixed size of the vertex set of the underlying graph $L$. The arising state space corresponds to what is know as Token Graphs or Symmetric Powers of Graphs, as defined in \cite{Alavi2002} and \cite{FabilaMonroy2012}.
\begin{definition}[\cite{FabilaMonroy2012}]\label{def:mathfrak_L_k}
	Consider a finite simple graph $L=(V,E)$. Define $\mathfrak{V}_k=\{\mathfrak{v}\subset v|\;|\mathfrak{v}|=k\}$ and $\mathfrak{E}_k=\{\langle\mathfrak{v},\mathfrak{w}\rangle|\mathfrak{v}\triangle\mathfrak{w}=\{v,w\};\langle v,w\rangle\in E\}$. The graph $\mathfrak{L}_k=(\mathfrak{V}_k,\mathfrak{E}_k)$  is called the $k$ Token Graph of $L$ and we denote by $\mathrm{deg}_k(\mathfrak{v})$ the degree of $\mathfrak{v}\in\mathfrak{V}_k$ in $\mathfrak{L}_k$.
\end{definition}
In choosing an adequate family of transition matrices $(P^{\theta})_{\theta\in\{0,1\}^{|V|}}$ we obtain a GEP on some graph $L$, based on which we can propose and quantify a Markov chain Monte Carlo approach to sampling densest sub-graphs. This GEP $\eta_k$ has a discrete time Markov chain representation $\mathfrak{S}_k^{MC}$ on $\mathfrak{L}_k$ as developed in \cite{Fischer2022}. Consider as an example the graph illustrated in Figure \ref{fig:bipartite_graph_interpretation_loop}. Indeed, there are various ways to approach GEPs, depending on the distributions which govern the transitions of individual particles. One way is by defining a Markov chain based on a series of dynamic graphs, i.e., by interpreting $\eta_k$ as a Markov chain in a random environment given by the family $(P^{\eta})_{\eta\in \{0,1\}^V,\;|\eta|=k}$. To this end, define  for $t\in\mathbb{N}$ the set $\mathfrak{N}_{k;t}=\{v\in V| \eta_{k;t}(v) = 1\}$ and the directed bipartite graph $B = (B_t)_{t\in\mathbb{N}}$ with $B_t = ((\mathfrak{N}_{k;t}, V\backslash \mathfrak{N}_{k;t}, \Sigma_{k,t})$, where $\Sigma_{k,t}$ represent all possible transitions of particles at time $t$, with potential loops on vertices in $\mathfrak{N}_{k;t}$. The random graph $B_t$ might be disconnected but serves as a graph theoretical representation of transitions of the GEP $\eta_k$, i.e., the positive transition probabilities of $P^{\eta}$ conditioned on $\eta$. \par
The central part is the change of $\Sigma_{k,t}$ when going from time $t$ to $t+1$. It captures the possible particle movements induced by the distribution used in Definition \ref{def:generalized_exclusion_process}. To illustrate the construction, we consider a $3$-regular graph on $6$ vertices depicted in Figure \ref{fig:three_reg_graph_underlying}. It will be the reference for missing edges which will illustrate the distinctive parts of each exclusion process. To any $\eta\in\{0,1\}^V$ we can associate a $\mathfrak{v}_{\eta}\subset V$ with $\mathfrak{v}_{\eta} = \{v\in V|\eta(v) = 1\}$ and in this sense we can write with a little abuse of notation $\eta = \mathfrak{v}_{\eta}$. For our central construction, assume that for some $\mathfrak{v}\subset V$ with $|\mathfrak{v}|=k$ at time $t\in\mathbb{N}$ the process satisfies $\eta_{k;t}=\mathfrak{v}$. Set $\mathfrak{v}^c:=V\backslash\mathfrak{v}$. Consider the possibly disconnected bipartite sub-graph $L_{\mathfrak{v},\mathfrak{v}^c}=((\mathfrak{v},\mathfrak{v}^c),E_{\mathfrak{v},\mathfrak{v}^c})$ of $L$ and add a loop $e_{v,v}$ to any $v\in\mathfrak{v}$. We call the resulting graph $B_t' = (V, \Sigma_{k,t}')$ with $ \Sigma_{k,t}'= E_{\mathfrak{v},\mathfrak{v}^c}\sqcup \{e_{v,v}|v\in\mathfrak{v}\}$. Indeed, for a $\bar{d}$-regular graph $L$ we obtain $|\Sigma_{k,t}|=\mathrm{deg}_k(\mathfrak{v})+k$, where $\mathrm{deg}_k(\mathfrak{v})$ denotes the degree of $\mathfrak{v}$ interpreted as vertex in the token graph $\mathfrak{L}_k$ associated to $L$, see \cite{Alavi2002} and \cite{FabilaMonroy2012}for a deeper discussion of Token Graphs. In Figure \ref{fig:bipartite_graph_interpretation_loop} we illustrate a possible example derived from the graph $L$ as in Figure \ref{fig:three_reg_graph_underlying} and some configuration $\mathfrak{v}$.
\begin{figure}
	\centering
	\begin{subfigure}[!htb]{0.4\textwidth}
		\begin{tikzpicture}[scale = 0.7]
	    	\Vertex[x=-2,y=0]{a1}
			\Vertex[x=-4,y=2]{a2}
			\Vertex[x=-2,y=4]{a3}
			\Vertex[x=2,y=0]{b1}
			\Vertex[x=4,y=2]{b2}
			\Vertex[x=2,y=4]{b3}
			\Edge(a1)(b1)
			\Edge(a1)(b2)
			\Edge(a2)(b1)
			\Edge(a3)(b3)
			\Edge(a3)(b2)
			\Edge(a1)(a2)
			\Edge(a3)(a2)
			\Edge(b3)(b1)
			\Edge(b3)(b2);
		\end{tikzpicture}
		\caption{The underlying $3$-regular graph on $6$ vertices which we are going to use as example for all constructions of exclusion processes.\label{fig:three_reg_graph_underlying}}
	\end{subfigure}\hfill
	\begin{subfigure}[!htb]{0.55\textwidth}
		\begin{tikzpicture}[scale = 0.8]
	    	\Vertex[x=-2,y=0]{a1}
			\Vertex[x=-4,y=2]{a2}
			\Vertex[x=-2,y=4]{a3}
			\Vertex[x=2,y=0]{b1}
			\Vertex[x=4,y=2]{b2}
			\Vertex[x=2,y=4]{b3}
			\Vertex[x=-2,y=0,color = blue,size = 0.4]{z}
			\Vertex[x=-4,y=2,color = blue,size = 0.4]{z}
			\Vertex[x=-2,y=4,color = blue,size = 0.4]{z}
			\Edge[Direct](a1)(b1)
			\Edge[Direct](a1)(b2)
			\Edge[Direct](a2)(b1)
			\Edge[Direct](a3)(b3)
			\Edge[Direct](a3)(b2)
			\Edge[style = dashed](a1)(a2)
			\Edge[style = dashed](a3)(a2)
			\Edge[style = dashed](b3)(b1)
			\Edge[style = dashed](b3)(b2);
			\Edge[loopposition=180,loopshape=45,Direct](a3)(a3)
			\Edge[loopposition=180,loopshape=45,Direct](a1)(a1)
			\Edge[loopposition=180,loopshape=45,Direct](a2)(a2);
		\end{tikzpicture}
		\caption{Potential particle displacements; Uniform distribution for drawing a directed edge. States of the vertices connected by said edge are interchanged. When drawing a loop, everything remains the same. \label{fig:bipartite_graph_interpretation_loop}}
	\end{subfigure}
\end{figure}
In each time step, the exclusion process makes a step by drawing one edge uniformly from $\Sigma_{k,t}'$ and exchanging the state of the endpoints. Note, that in changing the number of present edges also the transition probabilities change due to the uniform distribution over $\Sigma_{k,t}'$. This renders the process, in particular, in-homogeneous because the transition probabilities now depend on the current configuration $\mathfrak{v}$. A configuration $\mathfrak{v}$ can be seen as a representative of the sub-graph class of vertex induced sub-graphs which are spanned by a subset of the original vertex set and all edges included therein.
\begin{definition}
	Let $L=(V,E)$ be any simple graph and $\mathfrak{v}\subseteq V$. Then the graph $L_{\mathfrak{v}}= (\mathfrak{v},E_{\mathfrak{v}})$ with $\langle v,w\rangle\in E_{\mathfrak{v}}$ if and only if $v,w\in\mathfrak{v}$ and $\langle v,w\rangle\in E$ is called the vertex induced sub-graph of $L$ on $\mathfrak{v}$. 
\end{definition}
In what follows, in order to obtain meaningful results we work under the assumption that both $L$ and $L^c$ are connected graphs, as explained in the Introduction. A further discussion of the limitations is postponed to the Outlook. If this assumption is removed, the additional difficulty of initial condition dependencies on disconnected graphs for particle systems has to be considered. We leave this open to further research and remain with the assumption on $L$ and $L^c$, giving some pointers in the Outlook. 
\section{Convergence Results}
	We consider the generalized exclusion process $\eta_k$ as constructed above. Then, by \cite{Fischer2022}, there is an associated Markov chain $\mathfrak{S}_k^{MC}$ on $\mathfrak{L}_k$ such that $\eta_k$ and $\mathfrak{S}_k^{MC}$ are equal in law. We make, in what follows, exclusively claims about the associated Markov chain $\mathfrak{S}_k^{MC}$ on $\mathfrak{L}_k$. 
\begin{theorem}\label{thm:stat_dist_S_BE}
	Let $k\in\{1,\hdots,\bar{n}-1\}$ and consider the Markov chain $\mathcal{S}^{MC}_k$ on $\mathfrak{L}_k=(\mathfrak{V}_k,\mathfrak{E}_k)$. Then it is aperiodic, irreducible and, hence, ergodic. Furthermore, it is a reversible chain and the stationary distribution $\pi_k^{MC}$ is given in terms of $\mathfrak{v}\in\mathfrak{V}_k$ by
	\begin{equation}
		\pi_k^{MC}(\mathfrak{v}) = \dfrac{\mathrm{deg}_k(\mathfrak{v})+k}{2|\mathfrak{E}_k|+k\binom{\bar{n}}{k}}
	\end{equation}			 
\end{theorem}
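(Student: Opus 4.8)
The plan is to read the one-step transition law of $\mathcal{S}^{MC}_k$ directly off the construction and then dispatch the four assertions by elementary Markov chain theory, the only non-trivial external input being connectivity of the token graph. By the preceding construction, when the chain sits at a configuration $\mathfrak{v}\in\mathfrak{V}_k$ it draws one edge uniformly from the set $\Sigma_{k,t}'$ of cardinality $\mathrm{deg}_k(\mathfrak{v})+k$, consisting of the $\mathrm{deg}_k(\mathfrak{v})$ bipartite edges in $E_{\mathfrak{v},\mathfrak{v}^c}$ (each realising a move to a distinct token-graph neighbour, since moving the particle from $v\in\mathfrak{v}$ to $w\notin\mathfrak{v}$ determines $\mathfrak{w}=(\mathfrak{v}\setminus\{v\})\cup\{w\}$ uniquely) together with the $k$ loops. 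Hence the transition matrix is
\begin{equation*}
	q_{\mathfrak{v},\mathfrak{w}}=\begin{cases}\dfrac{1}{\mathrm{deg}_k(\mathfrak{v})+k}, & \mathfrak{v}\sim\mathfrak{w}\text{ in }\mathfrak{L}_k,\\[4pt] \dfrac{k}{\mathrm{deg}_k(\mathfrak{v})+k}, & \mathfrak{w}=\mathfrak{v},\\[4pt] 0, & \text{otherwise,}\end{cases}
\end{equation*}
so that $\mathcal{S}^{MC}_k$ is the lazy random walk on $\mathfrak{L}_k$ whose holding probability at $\mathfrak{v}$ is generated by the $k$ artificial loops.

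Next I would settle the qualitative claims. For irreducibility it suffices that $\mathfrak{L}_k$ be connected, which holds precisely because $L$ is connected, by the token-graph connectivity result of \cite{FabilaMonroy2012}; since $q_{\mathfrak{v},\mathfrak{w}}>0$ whenever $\mathfrak{v}\sim\mathfrak{w}$, any path in $\mathfrak{L}_k$ lifts to a chain of positive-probability transitions, so all states communicate. Aperiodicity is immediate: as $k\geq 1$ we have $q_{\mathfrak{v},\mathfrak{v}}\geq \frac{k}{\mathrm{deg}_k(\mathfrak{v})+k}>0$ at every state, so each state has period $1$. Because $\mathfrak{V}_k$ is finite with $|\mathfrak{V}_k|=\binom{\bar{n}}{k}$, an irreducible aperiodic chain on it is ergodic, which yields the first assertion.

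For reversibility and the stationary law I would verify detailed balance for the candidate $\pi_k^{MC}(\mathfrak{v})=(\mathrm{deg}_k(\mathfrak{v})+k)/Z$ with $Z$ a normalising constant. For $\mathfrak{v}\sim\mathfrak{w}$ the reciprocal form of $q$ collapses both products to the same value,
\begin{equation*}
	\pi_k^{MC}(\mathfrak{v})\,q_{\mathfrak{v},\mathfrak{w}}=\frac{\mathrm{deg}_k(\mathfrak{v})+k}{Z}\cdot\frac{1}{\mathrm{deg}_k(\mathfrak{v})+k}=\frac{1}{Z}=\pi_k^{MC}(\mathfrak{w})\,q_{\mathfrak{w},\mathfrak{v}},
\end{equation*}
while for non-adjacent $\mathfrak{v}\neq\mathfrak{w}$ both sides vanish and the case $\mathfrak{v}=\mathfrak{w}$ is trivial; hence $\pi_k^{MC}$ is reversible and therefore stationary. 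The normalisation follows from the handshake identity $\sum_{\mathfrak{v}\in\mathfrak{V}_k}\mathrm{deg}_k(\mathfrak{v})=2|\mathfrak{E}_k|$ together with $|\mathfrak{V}_k|=\binom{\bar{n}}{k}$, giving $Z=\sum_{\mathfrak{v}}(\mathrm{deg}_k(\mathfrak{v})+k)=2|\mathfrak{E}_k|+k\binom{\bar{n}}{k}$, which is exactly the stated denominator.

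The computations are routine once the transition law is correctly extracted; the detailed-balance check is essentially the classical stationary law of a weighted random walk, the reciprocal form of $q_{\mathfrak{v},\mathfrak{w}}$ making the products telescope to $1/Z$. I therefore expect the only genuine obstacle to be irreducibility, which is not self-contained: it rests on the combinatorial fact that the $k$-token graph inherits connectivity from its base graph. It is worth noting that this step invokes only connectivity of $L$, the graph underlying $\mathfrak{L}_k$, so the additional standing hypothesis on $L^c$ does not enter the present statement and must be reserved for the quantitative convergence results to follow.
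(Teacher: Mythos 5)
Your proof is correct and follows essentially the same route as the paper: extract the transition matrix from the uniform draw over $\Sigma_{k,t}'$ (this is the paper's Proposition \ref{prop:exclusion_process_with_loops}), identify the chain as a random walk on $\mathfrak{L}_k$ with $k$ added loops per vertex, and obtain the degree-proportional stationary distribution and reversibility via detailed balance, with irreducibility from token-graph connectivity and aperiodicity from the loops. One terminological caution: do not call the chain \emph{lazy} --- the paper reserves that term for holding probability at least $\tfrac{1}{2}$, which by its Lemma \ref{lem:lazy_S_MC_geom_cond} generally fails here; your argument only needs the holding probability to be positive, so nothing else changes.
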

Based on the transition matrix given by Proposition \ref{prop:exclusion_process_with_loops} the stationary distribution $\pi_k^{MC}$ of $\mathfrak{S}_k^{MC}$ can be derived directly and reversibility follows as well. Indeed, one can identify $\mathfrak{S}_k^{MC}$ with a random walk on $\mathfrak{L}_k$ where any vertex in $\mathfrak{L}_k$ has additionally to its incident edges $k$ loops. Under a dichotomy of geometric conditions on the underlying graph $L$, we find the following convergence speed of $\mathfrak{S}_k^{MC}$ to $\pi_k^{MC}$.
\begin{theorem}\label{thm:convergence_speed_S_MC_evolving_sets}
	Let $L$ be a $\bar{d}$-regular graph on $\bar{n}$ vertices and $k\in\{1,\hdots,\bar{n}-1\}$. Consider the Markov chain $\mathfrak{S}_k^{MC}$ on $\mathfrak{L}_k$ and $\varepsilon>0$. Additionally, define 
	\begin{align*}
		\rho(\bar{n},\bar{d},k) &:= \dfrac{4(\bar{n}-1)^2(\bar{n}-k)^2}{\bar{d}^4}\vspace{5pt}\\
		\xi(\bar{n},\bar{d},k) &:= \log\left(\binom{\bar{n}}{k}\right)+\log\left(\dfrac{k(\bar{n}-k)}{\bar{n}-1} + \dfrac{k}{\bar{d}}\right).
	\end{align*}
	If $L$ is such that $\min_{\mathfrak{v}\in\mathfrak{V}_k}\mathrm{avg\;deg}_k(L_{\mathfrak{v}}) < \bar{d}-1$ then for $\mathfrak{v},\mathfrak{w}\in\mathfrak{V}_k$ and
	\begin{equation}
		t\geq 1+ \rho(\bar{n},\bar{d},k)\left(\log\left(\dfrac{4}{\varepsilon}\right) + \xi(\bar{n},\bar{d},k)\right)
	\end{equation}
	the Markov chain $\mathfrak{S}_k^{MC}$ satisfies
	\begin{equation*}
		\left|\dfrac{\left(p_{k;\mathfrak{v},\mathfrak{w}}^{MC}\right)^{(t)} -\pi_k^{MC}(\mathfrak{w})}{\pi_k^{MC}(\mathfrak{w})}\right|\leq \varepsilon.
	\end{equation*}
	On the other hand, if $L$ is such that $\min_{\mathfrak{v}\in\mathfrak{V}_k}\mathrm{avg\;deg}_k(L_{\mathfrak{v}}) \geq \bar{d}-1$ then there is a constant $C>0$ such that its $\varepsilon$ mixing time w.r.t. the total variation distance is bounded by
	\begin{equation}
		\tau_{mix}(\varepsilon) \leq C\log_2(\varepsilon^{-1})\left(\xi(\bar{n},\bar{d},k)-\dfrac{(\bar{n}-1)^2}{\bar{d}^4}\right) .
	\end{equation}	
\end{theorem}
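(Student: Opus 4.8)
The plan is to realise $\mathfrak{S}_k^{MC}$ as a lazy reversible random walk on $\mathfrak{L}_k$ and to control its convergence through the Morris--Peres evolving-sets machinery applied to the conductance profile of the token graph. First I would record, from Theorem~\ref{thm:stat_dist_S_BE}, that the walk traverses an edge of $\mathfrak{L}_k$ with probability $1/(\mathrm{deg}_k(\mathfrak{v})+k)$ and stays put with probability $k/(\mathrm{deg}_k(\mathfrak{v})+k)$, so that the ergodic flow factorises as $\pi_k^{MC}(\mathfrak{v})\,p_{k;\mathfrak{v},\mathfrak{w}}^{MC}=1/Z$ with $Z=2|\mathfrak{E}_k|+k\binom{\bar n}{k}$ constant across every edge. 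Consequently, for $A\subseteq\mathfrak{V}_k$ the conductance collapses to the edge expansion $\Phi(A)=|\partial_{\mathfrak{L}_k}A|\big/\sum_{\mathfrak{v}\in A}(\mathrm{deg}_k(\mathfrak{v})+k)$. The second bookkeeping step is the identity $\mathrm{deg}_k(\mathfrak{v})=k(\bar d-\mathrm{avg\;deg}_k(L_{\mathfrak{v}}))$, obtained by sorting, at each of the $k$ occupied vertices, the $\bar d$ incident half-edges into those internal to $L_{\mathfrak{v}}$ and those crossing to $\mathfrak{v}^c$. Hence $\mathrm{deg}_k(\mathfrak{v})+k=k(\bar d+1-\mathrm{avg\;deg}_k(L_{\mathfrak{v}}))$, which is exactly what turns the threshold $\mathrm{avg\;deg}_k(L_{\mathfrak{v}})=\bar d-1$ into the balance point $\mathrm{deg}_k(\mathfrak{v})=k$ between genuine token moves and the $k$ laziness loops, explaining the dichotomy.

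The core analytic step is an isoperimetric lower bound on the conductance profile $\Phi(r)=\min\{\Phi(A):\pi_k^{MC}(A)\le r\}$. Here the assumptions that both $L$ and $L^c$ be connected enter decisively: connectivity of $L$ guarantees $\mathrm{deg}_k(\mathfrak{v})\ge 1$ for every proper configuration, so that $\mathfrak{L}_k$ is connected and no vertex weight degenerates, while connectivity of $L^c$ supplies the dual ``holes can always move'' estimate needed to bound the boundaries of large sets; together they give a particle--hole symmetric count that produces the factor $(\bar n-k)$. In the first branch of the dichotomy I would establish that this bound is uniform in the scale $r$, namely $\Phi(r)\ge\phi_0:=\bar d^{2}\big/\big(\sqrt2\,(\bar n-1)(\bar n-k)\big)$, the constant chosen precisely so that $\rho(\bar n,\bar d,k)=2/\phi_0^{2}$.

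With the profile in hand I would invoke the evolving-sets / heat-kernel bound of Morris and Peres: for a lazy reversible chain the relative pointwise discrepancy is at most $\varepsilon$ once $t\ge 1+\int_{4\pi_{\min}}^{4/\varepsilon}2\,dr/(r\,\Phi(r)^2)$, where $\pi_{\min}=\min_{\mathfrak{v}}\pi_k^{MC}(\mathfrak{v})$. Substituting $\Phi\ge\phi_0$ turns the integral into $\tfrac{2}{\phi_0^2}\log\!\big((4/\varepsilon)/(4\pi_{\min})\big)=\rho(\bar n,\bar d,k)\big(\log(4/\varepsilon)+\log\tfrac{1}{4\pi_{\min}}\big)$, and the normalisation identity $2|\mathfrak{E}_k|+k\binom{\bar n}{k}=\bar d\,\exp(\xi(\bar n,\bar d,k))$, which follows from $2|\mathfrak{E}_k|=\bar d\binom{\bar n}{k}k(\bar n-k)/(\bar n-1)$, lets me bound the lower-limit contribution $\log\tfrac{1}{4\pi_{\min}}$ by $\xi(\bar n,\bar d,k)$. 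This reproduces the stated threshold $t\ge 1+\rho(\log(4/\varepsilon)+\xi)$ and hence the first conclusion.

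For the second branch, where every induced subgraph is dense ($\mathrm{deg}_k(\mathfrak{v})\le k$ throughout, so the chain is at least half-lazy everywhere), the uniform-in-$r$ profile bound degrades, and I would instead bound the $1/4$-mixing time directly — either from the same evolving-sets integral restricted to the admissible range, or more simply through the induced spectral gap — obtaining $\tau_{mix}(1/4)\lesssim\xi(\bar n,\bar d,k)-(\bar n-1)^2/\bar d^4$, and then apply the submultiplicativity $\tau_{mix}(\varepsilon)\le\lceil\log_2\varepsilon^{-1}\rceil\,\tau_{mix}(1/4)$ to produce the factor $C\log_2(\varepsilon^{-1})$. The hard part throughout will be the token-graph edge-isoperimetric inequality: extracting a bound on $|\partial_{\mathfrak{L}_k}A|$ with the correct $\bar d^{2}/((\bar n-1)(\bar n-k))$ scaling that is valid at \emph{every} set size rather than only for the worst (spectral) cut, and doing so using solely the connectivity of $L$ and $L^c$. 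Pinning the constant to the value making $\rho=2/\phi_0^2$, and checking that it degenerates to the announced equality for the complete graph, is the delicate endgame; by contrast the evolving-sets reduction and the counting identities above are comparatively routine.
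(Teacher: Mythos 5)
Your overall architecture (reversible walk with $k$ loops on $\mathfrak{L}_k$, ergodic flow constant across edges, uniform lower bound on the conductance profile, Morris--Peres evolving sets for the first branch, a Lov\'asz--Kannan-type / halving argument for the second) matches the paper's. But there are two genuine gaps, and they sit exactly at the load-bearing points.

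First, the step you defer as ``the hard part'' --- the token-graph isoperimetric inequality --- \emph{is} the proof; everything else is bookkeeping, as you yourself note. The paper does not obtain it from a particle--hole counting argument using connectivity of $L$ and $L^c$: it bounds the isoperimetric constant via Mohar's inequality $\iota(\mathfrak{L}_k)\geq \frac{2}{|\mathfrak{V}_k|}\kappa(\mathfrak{L}_k)$ \cite{Mohar1989}, feeds in the vertex-connectivity theorem for token graphs, $\kappa(\mathfrak{L}_k)=\min_{\mathfrak{v}\in\mathfrak{V}_k}\mathrm{deg}_k(\mathfrak{v})$ (Theorem \ref{thm:L_k_vertex_connectivity}, imported from \cite{FischerToken2022}), and then converts to the stated constants using $\min_{\mathfrak{v}}\mathrm{deg}_k(\mathfrak{v})\geq\bar{d}$, $\max_{\mathfrak{v}}\mathrm{deg}_k(\mathfrak{v})\leq k(\bar{n}-k)$ and Propositions \ref{prop:size_edge_set_E_k} and \ref{prop:average_degree_quotient_upper_bound}. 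Without identifying some such input, your $\phi_0$ is reverse-engineered from the answer rather than derived, so the proposal does not prove the theorem.

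Second, your handling of laziness is inconsistent, and it makes the asserted profile bound false rather than merely unproven. You invoke the evolving-sets bound ``for a lazy reversible chain'' with integrand $2\,dr/(r\,\Phi(r)^2)$ in the \emph{first} branch; but by Lemma \ref{lem:lazy_S_MC_geom_cond}, $\min_{\mathfrak{v}}\mathrm{avg\;deg}_k(L_{\mathfrak{v}})<\bar{d}-1$ is precisely the regime where $\mathfrak{S}_k^{MC}$ is \emph{not} lazy. Theorem 5 of \cite{Morris2005} then carries a holding-probability factor of the form $(1-\gamma)^2/\gamma^2$ with $\gamma=\min_{\mathfrak{v}}p_{k;\mathfrak{v},\mathfrak{v}}^{MC}=\left(\bar{d}+1-\min_{\mathfrak{v}}\mathrm{avg\;deg}_k(L_{\mathfrak{v}})\right)^{-1}$, which can be of order $\bar{d}^2$ and cannot be dropped. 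You absorb it into the conductance by demanding $\Phi(r)\geq\phi_0=\bar{d}^2/\bigl(\sqrt{2}\,(\bar{n}-1)(\bar{n}-k)\bigr)$, but that bound fails in concrete cases covered by the first branch: take $L$ the complement of a Hamiltonian cycle on $\bar{n}$ vertices (so $L$ and $L^c$ are both connected, $\bar{d}=\bar{n}-3$) and $k=2$; then $\phi_0\to 1/\sqrt{2}$ as $\bar{n}\to\infty$, while the cut $S$ of all $2$-sets containing a fixed vertex has $\pi_k^{MC}(S)\to 0$ and conductance tending to $1/2$, so $\Phi\left(\frac{1}{2}\right)<\phi_0$. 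The paper's decomposition avoids this: it keeps the factor $4(1-\gamma)^2/\gamma^2$ explicit, proves only the weaker Mohar-type isoperimetric bound, and recombines the two at the end through the identity $\max_{\mathfrak{v}}\mathrm{deg}_k(\mathfrak{v})=k\bigl(\bar{d}-\min_{\mathfrak{v}}\mathrm{avg\;deg}_k(L_{\mathfrak{v}})\bigr)$ (the same counting identity you state), which is exactly how the clean constant $\rho(\bar{n},\bar{d},k)$ emerges.
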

Note that the constants we used in Theorem \ref{thm:convergence_speed_S_MC_evolving_sets}, namely $\rho(\bar{n},\bar{d},k)$ and  $\xi(\bar{n},\bar{d},k)$, grow at most like a polynomial as $\bar{n}\to\infty$ since $\log\left(\binom{\bar{n}}{k}\right)$ grows at most like $\bar{n}$ by Stirling's approximation and the remaining terms are polynomial in $\bar{n}$. Consequently, $\mathfrak{S}_k^{MC}$ mixes rapidly relative to the size of its actual state space, employing this term as used in \cite{Sinclair1992}. Its mixing is, therefore, polynomial fast in terms of the size of the underlying graph $L$, which makes it a suitable process for sampling sub-graphs of $L$.\par
In the following section, we propose an algorithm and the idea behind sampling densest sub-graphs of $L$ with high probability using the Markov chain $\mathfrak{S}_k^{MC}$ introduced in this section and exploiting its properties. 
\section{Particle based MCMC algorithm for sampling densest $k$-sub-graphs}
	By the structure of the stationary distribution we obtain an ordering based on the degree of the vertices, where the probability of drawing a densest sub-graph as $t\to\infty$ is smallest since $\mathrm{deg}_k(\mathfrak{v})=k\bar{d}-2|E_{\mathfrak{v}}|$. On the other hand, this also implies that it is most probable under $\pi_k^{MC}$ to draw a least dense sub-graph of $L$. Since the densest $k$ sub-graph in $L$ corresponds to the least dense $k$ sub-graph in $L^c$ and vice versa, we can exploit the fact that both are connected to inverse the roles under $\mathfrak{S}_{k}^{MC}$ with respect to the weights given by its stationary distribution. Using this, we propose the following MCMC approach to finding densest $k$ sub-graphs in a $\bar{d}$-regular graph with high probability. To this end consider a $\bar{d}$-regular simple connected graph $L$ on $\bar{n}$ vertices and assume that its graph complement $L^c$ is also connected. Then $L^c$ is a $\bar{n}-1-\bar{d}$ regular simple connected graph. We denote for any $k\in\{1,\hdots,\bar{n}-1\}$ by $\mathfrak{L}_k^{(c)}=(\mathfrak{V}_k, \mathfrak{E}_k^{(c)})$.\par
Let $k\in\{1,\hdots,\bar{n}-1\}$ and define $\mathfrak{S}_{k}^{MC}$ as the Markov chain associated to the exclusion process on $L^c$. Then, by Theorem \ref{thm:stat_dist_S_BE} the Markov chain $\mathfrak{S}_{k}^{MC}$ converges in distribution to $\pi_k^{MC}$ and with maximal probability we obtain a $\mathfrak{v}_{\ast}\in\mathfrak{V}_k$  which induces a least dense $k$-sub-graph $L_{\mathfrak{v}_{\ast}}^c$ of $L^c$ and, in turn, the set $\mathfrak{v}_{\ast}$ induces a densest $k$-sub-graph in $L$. We propose the following algorithm which only uses the local information of $L$ and the current configuration $\mathfrak{v}$ of $\mathfrak{S}_k^{MC}$. For the simulation, it is not necessary to construct the whole state space $\mathfrak{L}_k$ with $\binom{\bar{n}}{k}$ vertices and the size of the edge set given by Proposition \ref{prop:size_edge_set_E_k}. Algorithm \ref{alg:local_algo_S_k_MC} only depends on the neighborhood of all $v\in\mathfrak{v}$ after having fixed a $\mathfrak{v}\in\mathfrak{V}_k$. Exploiting the fact that $L$ is assumed to be $\bar{d}$-regular, we can bound the number of states we have to access in each turn by $\bar{d}\cdot k$ which is also a very crude upper bound for $\mathrm{deg}_k(\mathfrak{v})$. We use multi-sets to describe the algorithm and when we set $\mathfrak{X}=\emptyset$ as a multi-set, we impose that $(\mathfrak{X}\cup\{\mathfrak{v}\})\cup\{\mathfrak{v}\}=\mathfrak{X}\cup\{\mathfrak{v},\mathfrak{v}\}$. The multi-set $\mathfrak{X}$ can be replaced in real code by any mutable list-type object which allows multiple times the same entry. 
\begin{algorithm}
	\caption{Particle based algorithm to simulate $\mathfrak{S}_{k}^{MC}$.}
	\label{alg:local_algo_S_k_MC}
	\begin{algorithmic}
		\STATE\;
		\REQUIRE Terminal time for simulation, e.g. $t_{mix}$, number of trials $m$, graph $L$
		\STATE Initialize $t = 0$\; 
		\STATE Define $\mathfrak{X}=\emptyset$ as multi-set\;
		\STATE Run following burn in step\;
		\STATE Draw initial state $\mathfrak{v}\in \mathfrak{V}_k$\; 
		\STATE Set $\mathfrak{S}_{k,0}^{MC}=\mathfrak{v}$\;
		\WHILE{$t\leq t_{mix}$}
			\STATE Construct bipartite graph $B'(\mathfrak{v})$ as in Figure \ref{fig:bipartite_graph_interpretation_loop} \; 
			\STATE Draw uniformly an edge $\langle v,w\rangle$ from $B'(\mathfrak{v})$\; 
			\STATE Update $\mathfrak{S}_{k,t+1}^{MC} = (\mathfrak{v}\backslash\{v\})\cup\{w\}$; $t = t + 1$\; 
		\ENDWHILE
		\STATE Run sampling after burn in step\;
		\WHILE{$i\leq m$}
			\STATE Construct bipartite graph $B'(\mathfrak{v})$ as in Figure \ref{fig:bipartite_graph_interpretation_loop} \; 
			\STATE Draw uniformly an edge $\langle v,w\rangle$ from $B'(\mathfrak{v})$\; 
			\STATE Update $\mathfrak{S}_{k,t+1}^{MC} = (\mathfrak{v}\backslash\{v\})\cup\{w\}$; $t = t + 1$\; 
			\STATE Update $i = i+1$;
			\STATE Update $\mathfrak{X}=\mathfrak{X}\cup \mathfrak{S}_{k,t}$\;
		\ENDWHILE
		\RETURN $\mathfrak{X}$, statistic of $m$ final states after $t_{mix}$ simulation steps.
	\end{algorithmic}
\end{algorithm}    
Algorithm \ref{alg:local_algo_S_k_MC} mirrors the dynamics of the exclusion process, constructed earlier. It, therefore, samples rapidly, in the sense of \cite{Sinclair1992}, densest sub-graphs with high probability. We want to emphasize the importance of a detailed understanding of the underlying state space $\mathfrak{L}_k$ from the perspective of vertex induced sub-graphs. In particular, the geometric results as well as knowledge about the structure of the vertex set presented in the appendix allowed for the bounds in Theorem \ref{thm:convergence_speed_S_MC_evolving_sets}. Finally, the proposed algorithm benefits from the particle perspective in that it only needs to consider at most $k\bar{d}$ vertices in each step which is a considerable reduction from considering the whole vertex set $\mathfrak{V}_k$. 
\section{Proofs of central results}
	We consider the generalized exclusion process $\eta_k$ as constructed above. Then, by \cite{Fischer2022}, there is an associated Markov chain $\mathfrak{S}_k^{MC}$ on $\mathfrak{L}_k$ such that the marginal distributions of $\eta_k$ and $\mathfrak{S}_k^{MC}$ are identical. To start, we characterize the associated Markov chain $\mathfrak{S}_k^{MC}$ on $\mathfrak{L}_k$ by giving its transition matrix explicitly. 
\begin{proposition}\label{prop:exclusion_process_with_loops}
	Let $k\in\{1,\hdots,\bar{n}-1\}$ and consider the Markov chain $\mathfrak{S}_k^{MC}$ on $\mathfrak{L}_k=(\mathfrak{V}_k,\mathfrak{E}_k)$. We call its transition matrix $P_k^{MC}$. Then,
	it has the form
	\begin{equation*}
		p_{k;\mathfrak{v},\mathfrak{w}}^{MC} = \begin{cases} \dfrac{1}{\mathrm{deg}_k(\mathfrak{v})+k}, & \exists\langle x,y\rangle\in E\text{ s.t. } \mathfrak{v}\triangle\mathfrak{w}=\{x,y\},\vspace{5pt}\\
		\dfrac{k}{\mathrm{deg}_k(\mathfrak{v})+k},&\mathfrak{v}=\mathfrak{w},\\
		0, & \text{otherwise}.\end{cases}
	\end{equation*}
\end{proposition}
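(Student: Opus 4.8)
The plan is to read the transition probabilities directly off the uniform draw on the augmented bipartite graph and then to rewrite the resulting cardinalities in token-graph language. By the representation result of \cite{Fischer2022}, the transition matrix of $\mathfrak{S}_k^{MC}$ coincides, under the identification $\eta\leftrightarrow\mathfrak{v}_\eta$, with the transition matrix $Q$ of the GEP $\eta_k$ from Definition \ref{def:generalized_exclusion_process}. Hence it suffices to compute $q_{\eta,\mu}$ for the particular family $(P^{(\eta)})$ dictated by our construction: with the current configuration fixed to $\mathfrak{v}$, one edge is drawn uniformly from $\Sigma_{k,t}' = E_{\mathfrak{v},\mathfrak{v}^c}\sqcup\{e_{v,v}\mid v\in\mathfrak{v}\}$ and the occupations of its endpoints are exchanged.

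The first step is the combinatorial identity $|E_{\mathfrak{v},\mathfrak{v}^c}| = \mathrm{deg}_k(\mathfrak{v})$, which I would prove by exhibiting a bijection between the cut edges of $L$ across the partition $(\mathfrak{v},\mathfrak{v}^c)$ and the neighbours of $\mathfrak{v}$ in $\mathfrak{L}_k$. By Definition \ref{def:mathfrak_L_k}, a vertex $\mathfrak{w}$ is adjacent to $\mathfrak{v}$ in $\mathfrak{L}_k$ exactly when $\mathfrak{v}\triangle\mathfrak{w}=\{v,w\}$ for some $\langle v,w\rangle\in E$; since $|\mathfrak{v}|=|\mathfrak{w}|=k$, this forces one of $v,w$ into $\mathfrak{v}$ and the other into $\mathfrak{v}^c$, so $\langle v,w\rangle\in E_{\mathfrak{v},\mathfrak{v}^c}$. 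Conversely, each $\langle v,w\rangle\in E_{\mathfrak{v},\mathfrak{v}^c}$ with $v\in\mathfrak{v}$, $w\in\mathfrak{v}^c$ produces the neighbour $(\mathfrak{v}\setminus\{v\})\cup\{w\}$. Because $L$ is simple, distinct cut edges give distinct neighbours and conversely, so the correspondence is a bijection and the identity follows. In particular $|\Sigma_{k,t}'| = \mathrm{deg}_k(\mathfrak{v})+k$.

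From here the three cases are immediate. The uniform draw assigns mass $1/(\mathrm{deg}_k(\mathfrak{v})+k)$ to each edge of $\Sigma_{k,t}'$. A cut edge $\langle v,w\rangle$ sends $\mathfrak{v}$ to the token-graph neighbour $\mathfrak{w}$ with $\mathfrak{v}\triangle\mathfrak{w}=\{v,w\}$, and by the bijection this $\mathfrak{w}$ is reached by exactly one edge, giving $p_{k;\mathfrak{v},\mathfrak{w}}^{MC}=1/(\mathrm{deg}_k(\mathfrak{v})+k)$; the $k$ loops all leave the configuration unchanged and pool into the diagonal mass $k/(\mathrm{deg}_k(\mathfrak{v})+k)$; and any $\mathfrak{w}$ that is neither equal nor adjacent to $\mathfrak{v}$ is unreachable by a single swap and so receives probability $0$. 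The only point genuinely requiring care is the uniqueness built into the bijection, namely that no two distinct cut edges hit the same neighbour (otherwise the off-diagonal entry would be a multiple of $1/(\mathrm{deg}_k(\mathfrak{v})+k)$) and that a target with $|\mathfrak{v}\triangle\mathfrak{w}|\neq 2$ cannot arise in one step; both reduce to the simplicity of $L$ together with the cardinality constraint $|\mathfrak{v}|=|\mathfrak{w}|=k$, so I expect no real obstacle beyond bookkeeping.
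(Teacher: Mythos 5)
Your proposal is correct and follows essentially the same route as the paper: the paper's proof is simply the observation that the transition probabilities are read off from the uniform draw over $\Sigma_{k,t}' = E_{\mathfrak{v},\mathfrak{v}^c}\sqcup\{e_{v,v}\mid v\in\mathfrak{v}\}$, whose cardinality $\mathrm{deg}_k(\mathfrak{v})+k$ the paper asserts during the construction of $B_t'$. Your explicit bijection between cut edges and token-graph neighbours merely fills in the bookkeeping the paper leaves implicit, so there is no substantive difference in approach.
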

The form of the transition matrix follows directly from the construction of $B_t'$ in Figure \ref{fig:bipartite_graph_interpretation_loop} and the uniform distribution over all edges in $B_t'$ as discussed above. The stationary distribution $\pi_k^{MC}$ of $\mathfrak{S}_k^{MC}$ can consequently be derived directly and reversibility follows as well. Based on the transition matrix given by Proposition \ref{prop:exclusion_process_with_loops} and the stationary distribution given by Theorem \ref{thm:stat_dist_S_BE} one can identify $\mathfrak{S}_k^{MC}$ with a random walk on $\mathfrak{L}_k$ where any vertex in $\mathfrak{L}_k$ has additionally to its incident edges $k$ loops. Indeed, for almost all cases of $k$, this is not sufficient to render $\mathfrak{S}_k^{MC}$ a lazy random walk. Indeed, we can quantify this transition.
\begin{lemma}\label{lem:lazy_S_MC_geom_cond}
	The random walk $\mathfrak{S}_k^{MC}$ is lazy if and only if 
	\begin{equation*}
		\min_{\mathfrak{v}\in\mathfrak{V}_k}\mathrm{avg\;deg}_k(L_{\mathfrak{v}}) \geq \bar{d}-1.
	\end{equation*}
\end{lemma}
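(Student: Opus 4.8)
The plan is to read off the holding probability from Proposition~\ref{prop:exclusion_process_with_loops} and to translate the defining inequality of a lazy chain into a purely geometric statement about $L_{\mathfrak{v}}$. Recall that a random walk is lazy precisely when its holding probability is at least $\tfrac12$ at every state. By Proposition~\ref{prop:exclusion_process_with_loops} the holding probability at $\mathfrak{v}\in\mathfrak{V}_k$ equals $p_{k;\mathfrak{v},\mathfrak{v}}^{MC}=\frac{k}{\mathrm{deg}_k(\mathfrak{v})+k}$, so the laziness requirement at $\mathfrak{v}$ reads
\[
	\frac{k}{\mathrm{deg}_k(\mathfrak{v})+k}\ \geq\ \frac12,
\]
which rearranges to the clean condition $\mathrm{deg}_k(\mathfrak{v})\leq k$. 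Thus $\mathfrak{S}_k^{MC}$ is lazy if and only if $\mathrm{deg}_k(\mathfrak{v})\leq k$ holds for every $\mathfrak{v}\in\mathfrak{V}_k$.

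Next I would convert the token degree $\mathrm{deg}_k(\mathfrak{v})$ into data of the induced subgraph $L_{\mathfrak{v}}$. Since a neighbour of $\mathfrak{v}$ in $\mathfrak{L}_k$ is obtained by sliding a token along an edge of $L$ from $\mathfrak{v}$ to $\mathfrak{v}^c$, the degree $\mathrm{deg}_k(\mathfrak{v})$ counts exactly the edges of $L$ with one endpoint in $\mathfrak{v}$ and the other in $\mathfrak{v}^c$. A handshake count over the $k$ vertices of $\mathfrak{v}$ in the $\bar d$-regular graph $L$ gives $k\bar d = 2|E_{\mathfrak{v}}| + \mathrm{deg}_k(\mathfrak{v})$, i.e. $\mathrm{deg}_k(\mathfrak{v}) = k\bar d - 2|E_{\mathfrak{v}}|$. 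Using $\mathrm{avg\;deg}_k(L_{\mathfrak{v}}) = \tfrac{2|E_{\mathfrak{v}}|}{k}$ this becomes
\[
	\mathrm{deg}_k(\mathfrak{v}) = k\bigl(\bar d - \mathrm{avg\;deg}_k(L_{\mathfrak{v}})\bigr).
\]

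Substituting this identity, the pointwise condition $\mathrm{deg}_k(\mathfrak{v})\leq k$ is equivalent to $\bar d - \mathrm{avg\;deg}_k(L_{\mathfrak{v}}) \leq 1$, that is $\mathrm{avg\;deg}_k(L_{\mathfrak{v}}) \geq \bar d - 1$. Finally, since laziness demands the holding probability to be at least $\tfrac12$ simultaneously at every state, and the most demanding state is the one of largest token degree---equivalently, of smallest induced average degree---the quantifier over all $\mathfrak{v}\in\mathfrak{V}_k$ collapses to the single inequality $\min_{\mathfrak{v}\in\mathfrak{V}_k}\mathrm{avg\;deg}_k(L_{\mathfrak{v}}) \geq \bar d - 1$, as claimed. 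I expect the only delicate point to be the bookkeeping in the handshake identity---making sure boundary edges are counted once and interior edges twice---together with a clear statement that laziness is a uniform, worst-case requirement over states, so that the quantifier over $\mathfrak{V}_k$ legitimately becomes a minimum; everything else is a direct algebraic rearrangement.
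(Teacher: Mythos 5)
Your proof is correct and follows essentially the same route as the paper: read the holding probability $\frac{k}{\mathrm{deg}_k(\mathfrak{v})+k}$ off Proposition~\ref{prop:exclusion_process_with_loops}, apply the laziness criterion $p_{k;\mathfrak{v},\mathfrak{v}}^{MC}\geq\tfrac12$ at every state, and translate via $\mathrm{deg}_k(\mathfrak{v})=k\bigl(\bar d-\mathrm{avg\;deg}_k(L_{\mathfrak{v}})\bigr)$. The only difference is that you explicitly derive this handshake identity, which the paper uses without proof (stating $\mathrm{deg}_k(\mathfrak{v})=k\bar d-2|E_{\mathfrak{v}}|$ elsewhere in the text), so your write-up is a slightly more self-contained version of the same argument.
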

\begin{proof}
	Recall that the Markov chain is called lazy if and only if
	\begin{equation}
		\min_{\mathfrak{v},\mathfrak{v}}p_{k;\mathfrak{v},\mathfrak{v}}^{MC} \geq \dfrac{1}{2}.
	\end{equation}
	Therefore, using the expression derived in Proposition \ref{prop:exclusion_process_with_loops} we obtain that  $\mathfrak{S}_k^{MC}$ is lazy if and only if for all $\mathfrak{v}\in\mathfrak{V}_k$ we have
	\begin{align*}
		\dfrac{\mathrm{deg}_k(\mathfrak{v})}{k}+1\leq 2 \Leftrightarrow \bar{d} - \mathrm{avg\;deg}_k(L_{\mathfrak{v}}) \leq 1
	\end{align*}
	which proves the claim.
\end{proof}
An obvious property is $\min_{\mathfrak{v}\in\mathfrak{V}_k}\mathrm{avg\;deg}_k(L_{\mathfrak{v}})\leq \bar{d}$ since $L_{\mathfrak{v}}$ is a vertex induced sub-graph and, therefore, the degree of any vertex in $L_{\mathfrak{v}}$ is bounded by $\bar{d}$ which implies the same for the average. Geometrically, the condition given by Lemma \ref{lem:lazy_S_MC_geom_cond} leaves, consequently, only little room for the parameter triple $(\bar{n},\bar{d},k)$.
Due to the intermediate position $\mathfrak{S}_k^{MC}$ takes between the simple random walk on $\mathfrak{L}_k$ and the lazy random walk on this state space, we can in general only use that
\begin{equation*}
	\min_{\mathfrak{v}\in\mathfrak{V}_k}p_{k;\mathfrak{v},\mathfrak{v}}^{MC} \geq \gamma > 0
\end{equation*}
for some $\gamma\in\left(0,\frac{1}{2}\right)$ with a phase transition, if for a parameter triple $(\bar{n},\bar{d},k)$  we have $\min_{\mathfrak{v}\in\mathfrak{V}_k}\mathrm{avg\;deg}_k(L_{\mathfrak{v}}) \geq \bar{d}-1$ which is a geometric condition on $L$. We turn now to the proof of Theorem \ref{thm:convergence_speed_S_MC_evolving_sets}.
\begin{proof}[Proof of Theorem \ref{thm:convergence_speed_S_MC_evolving_sets}]
 	The goal of this proof is to obtain bounds on geometric properties of vertex induced subgraphs of $L$, which allow for a meaningful application of known results from Markov chain theory, as for example presented in \cite{Morris2005}. To begin, let $\gamma:=\left(\bar{d}+1-\min_{\mathfrak{v}\in\mathfrak{V}_k}\mathrm{avg\;deg}_k(L_{\mathfrak{v}})\right)^{-1}$ and consider the following two terms which we call the additive and the multiplicative constant, respectively, given as, firstly,
	\begin{equation*}
		 \dfrac{4(1-\gamma)^2}{\gamma^2}\dfrac{\log\left(\dfrac{2|\mathfrak{E}_k|+k|\mathfrak{V}_k|}{4(\min\{\mathrm{deg}_k(\mathfrak{v}),\mathrm{deg}_k(\mathfrak{w})\}+k)}\right)}{\left(\iota(\mathfrak{L}_k)\frac{2|\mathfrak{E}_k|+k|\mathfrak{V}_k|}{\max_{\mathfrak{v}\in\mathfrak{V}_k}\mathrm{deg}_k(\mathfrak{v})+k}\right)^2}
	\end{equation*}
	and, secondly,
	\begin{equation*}
		 \dfrac{4(1-\gamma)^2}{\gamma^2}\left(\iota(\mathfrak{L}_k)\frac{2|\mathfrak{E}_k|+k|\mathfrak{V}_k|}{\max_{\mathfrak{v}\in\mathfrak{V}_k}\mathrm{deg}_k(\mathfrak{v})+k}\right)^{-2}.
	\end{equation*}
	By \cite{Mohar1989} we can estimate the isoperimetric constant $\iota(\mathfrak{L}_k)$ from below by a constant times the vertex connectivity $\kappa(\mathfrak{L}_k)$ of $\mathfrak{L}_k$ in the form
	\begin{equation*}
		\iota(\mathfrak{L}_k)\geq \dfrac{2}{|\mathfrak{V}_k|}\kappa(\mathfrak{L}_k)
	\end{equation*}
	and we have from Theorem \ref{thm:L_k_vertex_connectivity} the value of $\kappa(\mathfrak{L}_k)$ as $\kappa(\mathfrak{L}_k)=\min_{\mathfrak{v}\in\mathfrak{V}_k}\mathrm{deg}_k(\mathfrak{v})$. This gives us, consequently, in the first case the lower bound for 
	\begin{align*}
		\iota(\mathfrak{L}_k)\frac{2|\mathfrak{E}_k|+k|\mathfrak{V}_k|}{\max_{\mathfrak{v}\in\mathfrak{V}_k}\mathrm{deg}_k(\mathfrak{v})+k} &\geq 2\min_{\mathfrak{v}\in\mathfrak{V}_k}\mathrm{deg}_k(\mathfrak{v})\dfrac{\mathrm{avg\;deg}(\mathfrak{L}_k)+k}{\max_{\mathfrak{v}\in\mathfrak{V}_k}\mathrm{deg}_k(\mathfrak{v})+k}\\
		&\geq  \min_{\mathfrak{v}\in\mathfrak{V}_k}\mathrm{deg}_k(\mathfrak{v})\dfrac{\mathrm{avg\;deg}(\mathfrak{L}_k)}{\max_{\mathfrak{v}\in\mathfrak{V}_k}\mathrm{deg}_k(\mathfrak{v})}
	\end{align*}
	using for the last inequality that $\min_{\mathfrak{v}\in\mathfrak{V}_k}\mathrm{avg\;deg}_k(L_{\mathfrak{v}}) < \bar{d}-1$. This in addition to $\max_{\mathfrak{v}\in\mathfrak{V}_k}\mathrm{deg}_k(\mathfrak{v})\leq k(\bar{n}-k)$ and Proposition \ref{prop:average_degree_quotient_upper_bound} gives the following upper bound for the multiplicative constant by
	\begin{align*}
		\dfrac{4(1-\gamma)^2}{\gamma^2}\left(\frac{\iota(\mathfrak{L}_k)(2|\mathfrak{E}_k|+k|\mathfrak{V}_k|)}{\max_{\mathfrak{v}\in\mathfrak{V}_k}\mathrm{deg}_k(\mathfrak{v})+k}\right)^{-2} &\leq \dfrac{4}{k^2} \left(\dfrac{\max_{\mathfrak{v}\in\mathfrak{V}_k}\mathrm{deg}_k(\mathfrak{v})}{\mathrm{avg\;deg}(\mathfrak{L}_k)}\right)^4\left(\dfrac{\mathrm{avg\;deg}(\mathfrak{L}_k)}{\min_{\mathfrak{v}\in\mathfrak{V}_k}\mathrm{deg}_k(\mathfrak{v})}\right)^2\\
		&\leq  \dfrac{4}{k^2} \left(\dfrac{\bar{n}-1}{\bar{d}}\right)^4\left(\dfrac{{\mathrm{avg\;deg}(\mathfrak{L}_k)}}{\min_{\mathfrak{v}\in\mathfrak{V}_k}\mathrm{deg}_k(\mathfrak{v})}\right)^2.
	\end{align*}
Employing the lower bound $\min_{\mathfrak{v}\in\mathfrak{V}_k}\mathrm{deg}_k(\mathfrak{v})\geq \bar{d}$, which can be derived easily by using $k\leq \bar{n}-1$ and the fact that there is at least one empty vertex in $L$ with degree $\bar{d}$, one can obtain the following upper bound, using Proposition \ref{prop:average_degree_quotient_upper_bound}, independent of the geometry of $L$ given by
	\begin{equation*}
		\dfrac{4}{k^2} \left(\dfrac{\bar{n}-1}{\bar{d}}\right)^4\left(\dfrac{{\mathrm{avg\;deg}(\mathfrak{L}_k)}}{\min_{\mathfrak{v}\in\mathfrak{V}_k}\mathrm{deg}_k(\mathfrak{v})}\right)^2\leq \dfrac{4(\bar{n}-1)^2(\bar{n}-k)^2}{\bar{d}^4}.
	\end{equation*}
	Since the additive constant equals the multiplicative constant times a logarithmic term, we are only going to focus on the logarithmic term and we note
	\begin{align*}
		\log\left(\dfrac{2|\mathfrak{E}_k|+k|\mathfrak{V}_k|}{4(\min\{\mathrm{deg}_k(\mathfrak{v}),\mathrm{deg}_k(\mathfrak{w})\}+k)}\right) &\leq \log\left(\dfrac{2|\mathfrak{E}_k|+k|\mathfrak{V}_k|}{\min_{\mathfrak{v}\in\mathfrak{V}_k}\mathrm{deg}_k(\mathfrak{v})+k}\right)\\
		&=\log\left(\binom{\bar{n}}{k}\right)+\log\left(\dfrac{\mathrm{avg\;deg}(\mathfrak{L}_k)+k}{\min_{\mathfrak{v}\in\mathfrak{V}_k}\mathrm{deg}_k(\mathfrak{v})+k}\right).
	\end{align*}
	For the second summand, we can use the bound which we used already above to obtain
	\begin{equation*}
		\log\left(\dfrac{\mathrm{avg\;deg}(\mathfrak{L}_k)+k}{\min_{\mathfrak{v}\in\mathfrak{V}_k}\mathrm{deg}_k(\mathfrak{v})+k}\right)\leq \log\left(\dfrac{k(\bar{n}-k)}{\bar{n}-1} + \dfrac{k}{\bar{d}}\right).
	\end{equation*}
	Therefore, we arrive at the upper bound on the additive constant
	\begin{equation*}
		\dfrac{4(\bar{n}-1)^2(\bar{n}-k)^2}{\bar{d}^4}\left(\log\left(\binom{\bar{n}}{k}\right)+\log\left(\dfrac{k(\bar{n}-k)}{\bar{n}-1} + \dfrac{k}.{\bar{d}}\right)\right)
	\end{equation*}
	We now employ some results based on evolving sets presented in \cite{Morris2005}. To this end, we will focus on lower bounds for $\Phi_k(u):=\inf\left\{\frac{\partial(S,S^c)}{\pi_k^{MC}(S)}\big|\,\pi_k^{MC}(S)\leq u\right\}$ for $u\in\left[\min_{\mathfrak{v}\in\mathfrak{V}_k}\pi_k^{MC}(\mathfrak{v}),\frac{1}{2}\right]$ as defined in \cite{Morris2005}. In particular, we use that $\Phi(u)\geq \Phi\left(\frac{1}{2}\right)$. The first claim then follows by  Theorem 5 of \cite{Morris2005}, the second one by \cite{LovKan99}. To conclude the proof, note that for $\bar{S}\in \left\{S\subset \mathfrak{V}_k\big|\,|S|\leq \frac{|\mathfrak{V}_k|}{2}\right\}\cap \left\{S\subset \mathfrak{V}_k\big|\, \pi_k^{MC}(S)\leq \frac{1}{2}\right\}$ we have
	\begin{align*}
		\dfrac{\partial(\bar{S},\bar{S}^c)}{\pi_k^{MC}(\bar{S})} \geq \dfrac{\partial(\bar{S},\bar{S}^c)}{|\bar{S}|}\dfrac{2|\mathfrak{E}_k|+k|\mathfrak{V}_k|}{\max_{\mathfrak{v}\in\mathfrak{V}_k}\mathrm{deg}_k(\mathfrak{v})+k}\geq \iota(\mathfrak{L}_k)\dfrac{2|\mathfrak{E}_k|+k|\mathfrak{V}_k|}{\max_{\mathfrak{v}\in\mathfrak{V}_k}\mathrm{deg}_k(\mathfrak{v})+k}
	\end{align*}
	where we used $\bar{S}\in \left\{S\subset \mathfrak{V}_k|\,|S|\leq \frac{|\mathfrak{V}_k|}{2}\right\}$ for the last estimate and the definition of the isoperimetric constant of a graph. Since, additionally, $\bar{S}\in \left\{S\subset \mathfrak{V}_k\big|\, \pi_k^{MC}(S)\leq \frac{1}{2}\right\}$, taking the infimum over all $S\in\left\{S\subset \mathfrak{V}_k\big|\, \pi_k^{MC}(S)\leq \frac{1}{2}\right\}$ we arrive at
	\begin{equation*}
		\Phi_k\left(\frac{1}{2}\right) \geq  \iota(\mathfrak{L}_k)\dfrac{2|\mathfrak{E}_k|+k|\mathfrak{V}_k|}{\max_{\mathfrak{v}\in\mathfrak{V}_k}\mathrm{deg}_k(\mathfrak{v})+k} 
	\end{equation*}
	and, therefore, for all $u\in\left[\min_{\mathfrak{v}\in\mathfrak{V}_k}\pi_k^{MC}(\mathfrak{v}),\frac{1}{2}\right]$ we obtain
	\begin{equation*}
		\Phi_k(u) \geq  \iota(\mathfrak{L}_k)\dfrac{2|\mathfrak{E}_k|+k|\mathfrak{V}_k|}{\max_{\mathfrak{v}\in\mathfrak{V}_k}\mathrm{deg}_k(\mathfrak{v})+k}.
	\end{equation*}
	The second step consists in finding a lower bound for $\min_{\mathfrak{v}\in\mathfrak{V}_k} p_{k;\mathfrak{v},\mathfrak{v}}^{MC}$. We have already found in the proof of Lemma \ref{lem:lazy_S_MC_geom_cond} that
	\begin{equation*}
		\min_{\mathfrak{v}\in\mathfrak{V}_k} p_{k;\mathfrak{v},\mathfrak{v}}^{MC}\geq \dfrac{1}{\bar{d}+1-\min_{\mathfrak{v}\in\mathfrak{V}_k}\mathrm{avg\;deg}_k(L_{\mathfrak{v}})}=\gamma.
	\end{equation*}
	Having found the necessary bounds, we can apply Theorem 5 of \cite{Morris2005} and obtain after integration 
	\begin{equation}\label{eq:upper_bd_integral_cheeger_constant}
		\int_{4(\pi_k^{MC}(\mathfrak{v})\wedge \pi_k^{MC}(\mathfrak{w}))}^{4\varepsilon^{-1}}\dfrac{4\,\mathrm{d}u}{u\Phi_k(u)^2}\leq \dfrac{\left(\log\left(\dfrac{4}{\varepsilon}\right) - \log\left(\frac{4(\min\{\mathrm{deg}_k(\mathfrak{v}),\mathrm{deg}_k(\mathfrak{w})\}+k)}{2|\mathfrak{E}_k|+k|\mathfrak{V}_k|}\right)\right)}{\left(\iota(\mathfrak{L}_k)\frac{2|\mathfrak{E}_k|+k|\mathfrak{V}_k|}{\max_{\mathfrak{v}\in\mathfrak{V}_k}\mathrm{deg}_k(\mathfrak{v})+k}\right)^2}.
	\end{equation}
	We identify the terms in the right hand side of equation \ref{eq:upper_bd_integral_cheeger_constant} with the additive constant and the multiplicative constant for which we have found meaningful upper bounds at the beginning of this proof, such that we obtain the first claim. The second claim follows by the same estimate for $\Phi_k(u)$ and Theorem 2.2 of \cite{LovKan99} as well as equation (5) of \cite{Morris2005} using the bound of the additive term by $\xi(\bar{n},\bar{d},k)$ as defined in the Theorem.
\end{proof}
This concludes our section on the proof of the main Theorem \ref{thm:convergence_speed_S_MC_evolving_sets}.
\section{Outlook}
	The main goal of this document was the presentation of an MCMC algorithm to sample densest $k$ sub-graphs from regular graphs $L$ and the quantification of its convergence speed. We needed the additional condition that $L^c$ is connected, as well. This gives us two natural aspects from which we could start generalizations of the method. \par
Firstly, let us consider the generalization to simple non-regular graphs under the condition that $L^c$ is connected and fix some $k$. We construct the following example $L$ by assuming $k<\lfloor\frac{\bar{n}}{2}\rfloor$ and looking at two complete graphs $K_k$ and $K_{\bar{n}-k}$. We assume that $L$ has $\bar{n}$ vertices and is constructed by gluing together the two complete graphs by adding an edge between one vertex in $K_k$ and one vertex in $K_{\bar{n}-k}$. Therefore, $L^c$ is connected being the complete bipartite graph minus one edge. A densest $k$ sub-graph $L_{\mathfrak{v}}$ is then given by the complete graph $K_k$, which can be realized by placing either $k$ vertices in $K_k$ or $K_{\bar{n}-k}$ calling them $\mathfrak{v}$ and $\mathfrak{v}'$, respectively. Seen as vertices in the Token Graph $\mathfrak{L}_k$ this shows that the identifiability of densest $k$ sub-graphs via their degree in $\mathfrak{L}_k$ is violated if $L$ is not regular because $\mathrm{deg}_k(\mathfrak{v})<\mathrm{deg}_k(\mathfrak{v}')$ even though they have identical density as defined in the introduction. Nonetheless, $\mathrm{deg}_k(\mathfrak{v})=1$ and is the minimal degree. Therefore, the sampling still gives densest subgraphs with high probability but the hit rate will be much lower since not all densest sub-graphs lie in the same level set w.r.t. $\mathrm{deg}_k(\cdot)$. Adjustments of the exclusion process construction can be helpful here, for another example which respects finer geometric features of the sub-graphs see \cite{Fischer2022}.\par
Keeping on the other hand the regularity condition and dropping the connectedness of $L^c$, we encounter another interesting problem. The graph $L^c$ is $\bar{n}-\bar{d}-1$ regular by assumption and assuming that it is disconnected, we find a least dense sub-graph on $k$ vertices by finding a redistribution of particles on the connected components. Note that each connected component is by assumption itself a $\bar{n}-\bar{d}-1$ regular graph. Interesting combinatorial questions arise then from monotony questions of the number of particles in each connected components w.r.t. to its size. Under the condition on $L$, that there is a monotonously increasing function $f_L$ which maps integers $l\in\{1,\hdots,\bar{n}\}$ to the number of particles in $k'\{1,\hdots,k\}$ such that the least dense sub-graph can be obtained by putting $f(l)$ particles in the connected component of $L^c$ of size $l$, our result works as well. But the geometric implications on $L$ as well as interpretations for $f$ need to be worked out before any meaning can be given to a result in this case. 

\newpage
\printbibliography

\newpage
\appendix
\section{Note on the subtleties of the exclusion process construction}
Exclusion processes are not a new subject and have been analyzed, mostly for continuous time instead of discrete time, in statistical mechanics at length. This first subsection is meant to make the point regarding the construction from the first part of this publication. We translate a "classical exclusion process" discussed in \cite{Diaconis1993} into the framework of dynamic random graphs and then to the token graph $\mathfrak{L}_k$. To this end define $\mathfrak{N}_{k;t}=\{v\in V| \eta_{k;t}(v) = 1\}$ and the time dependent graph $B = (B_t)$ with $B_t = ((\mathfrak{N}_{k;t}, V\backslash \mathfrak{N}_{k;t}, \Sigma_{k,t})$.\par
We adjust the step of $\Sigma_{k,t}$ when going from time $t$ to $t+1$ and illustrate it using the same $3$-regular graph on $6$ vertices depicted in Figure \ref{fig:three_reg_graph_underlying}. Now, the discrete time exclusion process discussed in \cite{Diaconis1993} can be understood as follows as a sequence of random graphs. To any $\eta\in\{0,1\}^V$ we associate as before a $\mathfrak{v}_{\eta}\subset V$ with $\mathfrak{v}_{\eta} = \{v\in V|\eta(v) = 1\}$ and in this sense we can write with a little abuse of notation $\eta = \mathfrak{v}_{\eta}$. Recall, that we assume that for some $\mathfrak{v}\subset V$, $|\mathfrak{v}|=k$ at time $t\in\mathbb{N}$ the process satisfies $\eta_{k;t}=\mathfrak{v}$. Set $\mathfrak{v}^c:=V\backslash\mathfrak{v}$. Consider the possibly disconnected bipartite sub-graph $L_{\mathfrak{v},\mathfrak{v}^c}=((\mathfrak{v},\mathfrak{v}^c),E_{\mathfrak{v},\mathfrak{v}^c})$ of $L$. Then, add all edges to $L_{\mathfrak{v},\mathfrak{v}^c}$ which are contained in the vertex induced sub-graph $L_{\mathfrak{v}}=(\mathfrak{v},E_{\mathfrak{v}})$ of $L$. We call the resulting graph $B_t = (V, \Sigma_{k,t})$ with $ \Sigma_{k,t}= E_{\mathfrak{v},\mathfrak{v}^c}\sqcup E_{\mathfrak{v}}$. Indeed, for a $\bar{d}$-regular graph $L$ we obtain $|\Sigma_{k,t}|=\bar{d}k$. Figure \ref{fig:bipartite_graph_interpretation_DSC} illustrates one possible situation based on a $3$-regular graph on six vertices. Remark that all edges incident to vertices in $\mathfrak{v}$ are also present in $B_t$ which preserves their degree and makes it accessible to calculate the number of edges in $B_t$.
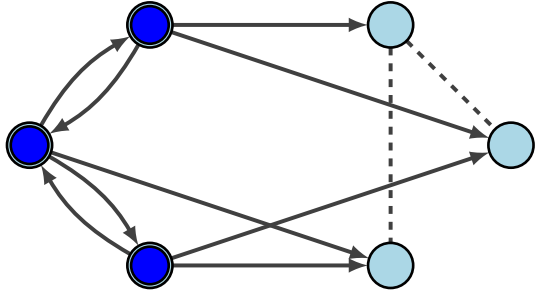
\begin{figure}[!htb]
	\centering
	\begin{tikzpicture}[scale = 0.8]
	    	\Vertex[x=-2,y=0]{a1}
		\Vertex[x=-4,y=2]{a2}
		\Vertex[x=-2,y=4]{a3}
		\Vertex[x=2,y=0]{b1}
		\Vertex[x=4,y=2]{b2}
		\Vertex[x=2,y=4]{b3}
		\Vertex[x=-2,y=0,color = blue,size = 0.5]{z}
		\Vertex[x=-4,y=2,color = blue,size = 0.5]{z}
		\Vertex[x=-2,y=4,color = blue,size = 0.5]{z}
		\Edge[Direct](a1)(b1)
		\Edge[Direct](a1)(b2)
		\Edge[Direct](a2)(b1)
		\Edge[Direct](a3)(b3)
		\Edge[Direct](a3)(b2)
		\Edge[style={-latex}, bend=15](a1)(a2)
		\Edge[style={-latex}, bend=15](a3)(a2)
		\Edge[style={-latex}, bend=15](a2)(a1)
		\Edge[style={-latex}, bend=15](a2)(a3)
		\Edge[style=dashed](b3)(b1)
		\Edge[style=dashed](b3)(b2);
	\end{tikzpicture}
	\caption{For an underlying $3$-regular graph we apply the construction based on the classical exclusion process. The graph $B_t$ constructed from the set of occupied sites $\mathfrak{v}$ containing blue particles and its complement in $V$ colored in pale blue. A particle displacement happens by drawing uniformly one of the directed edges and exchanging the state of the vertices connected by said edge.\label{fig:bipartite_graph_interpretation_DSC}}
\end{figure}
The exclusion process now consists of drawing one edge uniformly from $\Sigma_{k,t}$ and exchanging the state of the endpoints. Note that this might lead to exchanging two occupied sites which renders $P^{\eta}$ independent of $\eta$. As an interpretation of the exclusion process in this case one can see the exchange of states of two particles as their collision, both jumping back to the state they came from. Note that, as discussed in \cite{Diaconis1993}, results in a process whose associated Markov chain on $\mathfrak{L}_k$ is ergodic with a uniform stationary distribution over all states in $\mathfrak{L}_k$. It, therefore, does not "see" different sub-structures of the graph $L$ and behaves identically on all regular graphs with fixed degree $\bar{d}$. It is, consequently, not adapted to the problem we want to approach. Further structures of the graphs $B_t$ and $B_t'$, presented in hereinabove, can be imagined, accentuating different graphs structures in need of being analyzed via MCMC approaches.  
\section{Necessary results on Token Graphs}
\begin{theorem}[\cite{FischerToken2022}]\label{thm:L_k_vertex_connectivity}
	Let $L$ be a connected simple graph on $\bar{n}$ vertices and let $k\in\{1,\hdots,\bar{n}-1\}$. Then, $\kappa(\mathfrak{L}_k) = \min_{\mathfrak{v}\in\mathfrak{V}_k}\mathrm{deg}_k(\mathfrak{v})$ and a corresponding vertex cut is the neighborhood of $\mathfrak{v}_{\ast}\in\mathfrak{V}_k$ with $\mathrm{deg}_k(\mathfrak{v}_{\ast}) = \min_{\mathfrak{v}\in\mathfrak{V}_k}\mathrm{deg}_k(\mathfrak{v})$. 
\end{theorem}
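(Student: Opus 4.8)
I would prove the two inequalities $\kappa(\mathfrak{L}_k)\le \min_{\mathfrak{v}\in\mathfrak{V}_k}\mathrm{deg}_k(\mathfrak{v})$ and $\kappa(\mathfrak{L}_k)\ge \min_{\mathfrak{v}\in\mathfrak{V}_k}\mathrm{deg}_k(\mathfrak{v})$ separately, writing $m:=\min_{\mathfrak{v}\in\mathfrak{V}_k}\mathrm{deg}_k(\mathfrak{v})$, and I would exhibit the extremal cut in the course of the upper bound. It is worth recording first that $\mathrm{deg}_k(\mathfrak{v})$ is exactly the size of the edge-boundary of $\mathfrak{v}$ in $L$ (the number of edges of $L$ with one endpoint in $\mathfrak{v}$ and one outside), so $m$ is an isoperimetric quantity, and that token complementation $\mathfrak{v}\mapsto V\setminus\mathfrak{v}$ gives a graph isomorphism between the $k$- and $(\bar n-k)$-token graphs; this symmetry lets me assume $k\le \bar n/2$ throughout.

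\textbf{Upper bound.} For any graph $G$ one has $\kappa(G)\le\delta(G)$, realized by deleting the neighborhood of a minimum-degree vertex, which either disconnects $G$ or isolates that vertex. Applying this to $G=\mathfrak{L}_k$ at a minimizer $\mathfrak{v}_{\ast}$ with $\mathrm{deg}_k(\mathfrak{v}_{\ast})=m$ yields $\kappa(\mathfrak{L}_k)\le m$ and shows that the neighborhood $N(\mathfrak{v}_{\ast})$ is a vertex cut of size exactly $m$, which settles the second assertion of the theorem. The only care needed here is to rule out the degenerate case in which $\mathfrak{L}_k$ is complete, which is handled directly by inspecting the extremal Johnson-graph situation $L=K_{\bar n}$.

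\textbf{Lower bound.} The substance is to show that no vertex set $S$ with $|S|<m$ separates $\mathfrak{L}_k$. By Menger's theorem this is equivalent to producing, for every pair of distinct vertices $\mathfrak{u},\mathfrak{w}\in\mathfrak{V}_k$, at least $m$ internally vertex-disjoint $\mathfrak{u}$–$\mathfrak{w}$ paths, where a path is read as a schedule of single-token moves transforming $\mathfrak{u}$ into $\mathfrak{w}$ and internal disjointness means the schedules share no intermediate configuration. I would argue by induction on $k$ via the standard decomposition: fixing $v\in V$, split $\mathfrak{V}_k$ into the configurations containing $v$ and those avoiding $v$. The subgraph induced on the latter is isomorphic to the $k$-token graph of $L-v$, the subgraph induced on the former is isomorphic to the $(k-1)$-token graph of $L-v$, and the edges of $\mathfrak{L}_k$ between the two blocks correspond precisely to token moves across the edges of $L$ incident to $v$. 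Invoking the inductive hypothesis on these two smaller token graphs, together with the connectivity of $L-v$ (which I would control through the connectivity of $L$), one routes most of the required paths inside each block and stitches the remainder through the interface edges at $v$.

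\textbf{Main obstacle.} The crux is the gluing step: one must show that a hypothetical cut $S$ with $|S|<m$ cannot simultaneously sever both recursive blocks and destroy all crossing edges at $v$. This forces a choice of $v$ adapted to $S$—incident to the boundary of a near-minimal configuration—and a delicate counting that distributes $S$ across the two blocks while tracking how the minimum edge-boundary $m$ guarantees enough surviving interface edges; it is exactly this bookkeeping that pins the connectivity to the value $m$ rather than something smaller. I expect the base cases to be the other sensitive point: for $k$ at its extremes the token graph degenerates (to $L$ itself, respectively to its complemented analogue), so there the identity $\kappa=\delta$ must either be checked by hand or absorbed into the standing hypotheses on $L$, and these boundary instances, not the generic inductive step, are where the argument needs the most attention.
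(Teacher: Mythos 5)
A preliminary remark: the paper never proves this statement --- Theorem \ref{thm:L_k_vertex_connectivity} is imported verbatim from the external reference \cite{FischerToken2022} --- so your proposal can only be judged on its own terms, not compared with an argument in the paper. On those terms: your upper bound is correct and standard ($\kappa\le\delta$ applied to $\mathfrak{L}_k$ at a minimizer, with the complete case $\mathfrak{L}_k$ set aside), your identification of $\mathrm{deg}_k(\mathfrak{v})$ with the edge boundary of $\mathfrak{v}$ in $L$ is right, and the block decomposition you invoke (splitting $\mathfrak{V}_k$ by membership of a fixed $v$ into a copy of the $k$-token graph of $L-v$, a copy of the $(k-1)$-token graph of $L-v$, and interface edges given by token moves along edges at $v$) is a genuine structural fact. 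The problem is that the lower bound, which you yourself call the substance, is not actually proved: the ``gluing step'' and the ``delicate counting that distributes $S$ across the two blocks'' \emph{is} the theorem, and your text defers it. Worse, the induction as set up cannot run under the stated hypothesis. $L$ is only assumed connected, so $L-v$ may be disconnected ($v$ a cut vertex --- for a tree, every internal vertex is one). Then both blocks are token graphs of a disconnected graph, hence themselves disconnected, and the inductive hypothesis is not merely unavailable but false for them ($\kappa=0$ while the minimum degree is positive). Your plan to ``control the connectivity of $L-v$ through the connectivity of $L$'' is impossible: $1$-connectivity of $L$ gives no control whatsoever over $L-v$. Choosing $v$ to be a non-cut vertex (one always exists) does not repair this, because your own obstacle paragraph requires $v$ to be chosen ``adapted to $S$,'' and these two requirements conflict.

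There is a second, sharper defect: the base case is not merely ``sensitive,'' it is false. For $k=1$ one has $\mathfrak{L}_1=L$ and $\mathrm{deg}_1(\{v\})=\mathrm{deg}_L(v)$, so the statement specializes to $\kappa(L)=\delta(L)$ for every connected simple graph $L$; this fails, e.g., for two triangles sharing a vertex ($\kappa=1$, $\delta=2$), and it fails even for regular graphs, since connected regular graphs can have cut vertices. Consequently an induction on $k$ anchored at $k=1$ cannot start, ``checking the base case by hand'' is not an option, and in fact the theorem itself (as quoted, with no hypotheses beyond connectedness and $k\in\{1,\hdots,\bar{n}-1\}$) needs qualification at the extremes of $k$; any correct argument must restrict to a range such as $2\le k\le\bar{n}-2$, where the token graph genuinely gains connectivity over $L$, or impose stronger assumptions on $L$. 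In short: the easy half and the decomposition are right, but the heart of the proof is missing, and the inductive scaffolding collapses exactly at the configurations (cut vertices, trees, extreme $k$) that make this result hard.
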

\begin{proposition}[\cite{FischerToken2022}]\label{prop:size_edge_set_E_k}
	Consider the graph $\mathfrak{L}_k=(\mathfrak{V}_k, \mathfrak{E}_k)$ for $k\in\{1,\hdots,\bar{n}-1\}$. Then $|\mathfrak{V}_k|=\binom{\bar{n}}{k}$ and
	\begin{equation}\label{eq:size_of_edge_set_L_k_short}
		|\mathfrak{E}_k|=\dfrac{1}{2}\left(\bar{d}k\binom{\bar{n}}{k}-\bar{n}\bar{d}\binom{\bar{n}-2}{k-2}\right)=k(\bar{n}-k)\binom{\bar{n}}{k}\dfrac{\bar{d}}{2(\bar{n}-1)}.
	\end{equation}
\end{proposition}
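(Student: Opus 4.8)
The plan is to compute $|\mathfrak{E}_k|$ by a double-counting argument, starting from the handshaking identity $2|\mathfrak{E}_k| = \sum_{\mathfrak{v}\in\mathfrak{V}_k}\mathrm{deg}_k(\mathfrak{v})$ on the token graph. The first step is to record the combinatorial meaning of the token-graph degree: by Definition \ref{def:mathfrak_L_k} a neighbour of $\mathfrak{v}$ in $\mathfrak{L}_k$ is obtained by sliding a single token along an edge of $L$ whose endpoints are occupied and unoccupied in complementary ways, so $\mathrm{deg}_k(\mathfrak{v})$ equals the number of edges of $L$ with exactly one endpoint in $\mathfrak{v}$, i.e.\ the size of the edge boundary of $\mathfrak{v}$ in $L$. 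Counting the $\bar{d}k$ edge-endpoints incident to the $k$ vertices of $\mathfrak{v}$ and observing that each edge of $E_{\mathfrak{v}}$ is counted twice while each boundary edge is counted once yields the identity $\mathrm{deg}_k(\mathfrak{v}) = \bar{d}k - 2|E_{\mathfrak{v}}|$, which is the form best matched to the right-hand side of the claim.

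Substituting this identity and summing over $\mathfrak{v}\in\mathfrak{V}_k$ gives $2|\mathfrak{E}_k| = \bar{d}k\binom{\bar{n}}{k} - 2\sum_{\mathfrak{v}\in\mathfrak{V}_k}|E_{\mathfrak{v}}|$, using $|\mathfrak{V}_k| = \binom{\bar{n}}{k}$ from the first part of the statement. The remaining sum is evaluated by interchanging the order of summation: $\sum_{\mathfrak{v}}|E_{\mathfrak{v}}|$ counts incidence pairs $(\mathfrak{v}, e)$ with $e\in E$ having both endpoints inside $\mathfrak{v}$, and for a fixed edge the number of $k$-subsets containing both of its endpoints is $\binom{\bar{n}-2}{k-2}$. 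Since $L$ is $\bar{d}$-regular we have $|E| = \bar{n}\bar{d}/2$, so $\sum_{\mathfrak{v}}|E_{\mathfrak{v}}| = \frac{\bar{n}\bar{d}}{2}\binom{\bar{n}-2}{k-2}$. Plugging this in produces exactly the first closed form $|\mathfrak{E}_k| = \frac{1}{2}\bigl(\bar{d}k\binom{\bar{n}}{k} - \bar{n}\bar{d}\binom{\bar{n}-2}{k-2}\bigr)$.

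The final step is purely algebraic: reconciling this with the compact form on the far right. I would first use $k\binom{\bar{n}}{k} = \bar{n}\binom{\bar{n}-1}{k-1}$ together with Pascal's rule $\binom{\bar{n}-1}{k-1} = \binom{\bar{n}-2}{k-1} + \binom{\bar{n}-2}{k-2}$ to rewrite $k\binom{\bar{n}}{k} - \bar{n}\binom{\bar{n}-2}{k-2} = \bar{n}\binom{\bar{n}-2}{k-1}$, and then re-expand $\bar{n}\binom{\bar{n}-2}{k-1} = \frac{k(\bar{n}-k)}{\bar{n}-1}\binom{\bar{n}}{k}$ by writing all three binomials as factorials. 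I do not expect a genuine obstacle here; the only point demanding care is the combinatorial interpretation of $\mathrm{deg}_k(\mathfrak{v})$ as a count of interior (equivalently boundary) edges of $L$, since everything downstream is bookkeeping with binomial coefficients. An equivalent and equally short route counts boundary edges directly, each separating pair of endpoints admitting $2\binom{\bar{n}-2}{k-1}$ subsets, which lands immediately on $2|\mathfrak{E}_k| = \bar{n}\bar{d}\binom{\bar{n}-2}{k-1}$; I would present the interior-edge version first because it reproduces the stated first form verbatim.
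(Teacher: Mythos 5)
Your proof is correct. Note that the paper does not actually prove this proposition: it is imported from \cite{FischerToken2022} as a black box (it sits in the appendix precisely as a ``necessary result''), so there is no in-paper argument to compare against. Your double-counting route is the natural one and all three steps check out: the identification $\mathrm{deg}_k(\mathfrak{v})=\bar{d}k-2|E_{\mathfrak{v}}|$ (which the paper itself uses, unproved, in Section 4), the interchange $\sum_{\mathfrak{v}}|E_{\mathfrak{v}}|=\frac{\bar{n}\bar{d}}{2}\binom{\bar{n}-2}{k-2}$, and the binomial algebra $k\binom{\bar{n}}{k}-\bar{n}\binom{\bar{n}-2}{k-2}=\bar{n}\binom{\bar{n}-2}{k-1}=\frac{k(\bar{n}-k)}{\bar{n}-1}\binom{\bar{n}}{k}$. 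One remark worth making explicit: your first route leans on regularity twice (once in the pointwise degree identity, once in $|E|=\bar{n}\bar{d}/2$), whereas your alternative route --- counting, for each edge of $L$, the $2\binom{\bar{n}-2}{k-1}$ token sets it separates --- needs only $|E|=\bar{n}\bar{d}/2$ with $\bar{d}$ the \emph{average} degree. That weaker hypothesis is exactly the setting of Proposition \ref{prop:average_degree_quotient_upper_bound} (simple connected $L$, $\bar{d}$ the average degree), so if you want the edge count at the generality the paper's appendix actually requires, present the boundary-edge count as the main argument rather than the verbatim-matching one.
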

\begin{proposition}[\cite{FischerToken2022}]\label{prop:average_degree_quotient_upper_bound}
	Let $L$ be a simple connected graph on $\bar{n}$ vertices and $k\in\{1,\hdots,\bar{n}-1\}$. Denote by $\mathrm{avg\;deg}(\mathfrak{L}_k)$ the average degree in $\mathfrak{L}_k$, i.e., 
	\begin{equation*}
		\mathrm{avg\;deg}(\mathfrak{L}_k):=\dfrac{1}{|\mathfrak{V}_k|}\sum_{\mathfrak{v}\in\mathfrak{V}_k}\mathrm{deg}_k(\mathfrak{v}).
	\end{equation*}
	and by $\bar{d}$ the average degree in $L$.	Then average degree in $\mathfrak{L}_k$ satisfies
	\begin{equation}\label{eq:expression_quotient_average_degree}
		\dfrac{\mathrm{avg\;deg}(\mathfrak{L}_k)}{k(\bar{n}-k)}=\dfrac{\bar{d}}{\bar{n}-1}\leq 1.
	\end{equation}
\end{proposition}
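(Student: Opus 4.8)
The plan is to reduce the identity to the handshake lemma for the token graph $\mathfrak{L}_k$ combined with the explicit edge count already recorded in Proposition \ref{prop:size_edge_set_E_k}. Since the average degree of any finite graph equals twice its number of edges divided by its number of vertices, I would begin from
\begin{equation*}
	\mathrm{avg\;deg}(\mathfrak{L}_k) = \frac{2|\mathfrak{E}_k|}{|\mathfrak{V}_k|},
\end{equation*}
which is immediate from $\sum_{\mathfrak{v}\in\mathfrak{V}_k}\mathrm{deg}_k(\mathfrak{v}) = 2|\mathfrak{E}_k|$ together with the definition of $\mathrm{avg\;deg}(\mathfrak{L}_k)$ given in the statement.

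Next I would substitute $|\mathfrak{V}_k| = \binom{\bar{n}}{k}$ and the value $|\mathfrak{E}_k| = k(\bar{n}-k)\binom{\bar{n}}{k}\frac{\bar{d}}{2(\bar{n}-1)}$ from Proposition \ref{prop:size_edge_set_E_k}. The binomial coefficient and the factor of $2$ cancel, leaving $\mathrm{avg\;deg}(\mathfrak{L}_k) = k(\bar{n}-k)\frac{\bar{d}}{\bar{n}-1}$; dividing both sides by $k(\bar{n}-k)$, which is nonzero for $k\in\{1,\dots,\bar{n}-1\}$, yields the claimed equality. For the inequality I would note that $L$ is simple on $\bar{n}$ vertices, so every vertex has degree at most $\bar{n}-1$; hence its average degree satisfies $\bar{d}\leq\bar{n}-1$ and $\frac{\bar{d}}{\bar{n}-1}\leq 1$, with equality precisely when $L=K_{\bar{n}}$.

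I do not expect a genuine obstacle here: once Proposition \ref{prop:size_edge_set_E_k} is in hand the proof is a one-line cancellation, and the only subtlety is the harmless observation that $\mathrm{avg\;deg}(\mathfrak{L}_k)$ (the average over the token graph) must not be conflated with the quantity $\mathrm{avg\;deg}_k(L_{\mathfrak{v}})$ appearing elsewhere, which is the internal average degree of a single induced subgraph.

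If one prefers an argument independent of the edge-count formula, I would instead double-count directly. The key observation is that $\mathrm{deg}_k(\mathfrak{v})$ equals the number of edges of $L$ crossing the cut $(\mathfrak{v},\mathfrak{v}^c)$, since a token move out of $\mathfrak{v}$ is exactly a choice of an occupied endpoint $v$ and an empty neighbour $w$. Summing over $\mathfrak{v}$ then counts each edge $\langle v,w\rangle\in E$ once for every $k$-subset containing exactly one of its endpoints, of which there are $2\binom{\bar{n}-2}{k-1}$. This gives $\sum_{\mathfrak{v}\in\mathfrak{V}_k}\mathrm{deg}_k(\mathfrak{v}) = 2|E|\binom{\bar{n}-2}{k-1} = \bar{n}\bar{d}\binom{\bar{n}-2}{k-1}$, and the elementary ratio $\binom{\bar{n}-2}{k-1}/\binom{\bar{n}}{k} = \frac{k(\bar{n}-k)}{\bar{n}(\bar{n}-1)}$ produces the same conclusion, re-deriving Proposition \ref{prop:size_edge_set_E_k} as a byproduct.
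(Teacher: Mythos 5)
Your proof is correct, but there is no proof in the paper to compare it against: Proposition \ref{prop:average_degree_quotient_upper_bound} is imported from \cite{FischerToken2022} without proof, as are Theorem \ref{thm:L_k_vertex_connectivity} and Proposition \ref{prop:size_edge_set_E_k}. Your first derivation (handshake lemma plus the edge count of Proposition \ref{prop:size_edge_set_E_k}) is the expected one-line cancellation and is valid as far as it goes; its only weakness is that it inherits whatever hypotheses Proposition \ref{prop:size_edge_set_E_k} carries. That proposition is stated here without saying whether $\bar{d}$ is a regular degree or an average degree --- everywhere in the body of the paper $\bar{d}$ is the degree of a $\bar{d}$-regular graph --- whereas the statement you are proving explicitly allows an arbitrary simple connected $L$ with $\bar{d}$ its average degree. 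Your second, self-contained double count closes exactly this gap: $\mathrm{deg}_k(\mathfrak{v})=|E_{\mathfrak{v},\mathfrak{v}^c}|$ follows from Definition \ref{def:mathfrak_L_k} (distinct crossing edges give distinct symmetric differences, hence distinct neighbours), each edge of $L$ crosses the cut for exactly $2\binom{\bar{n}-2}{k-1}$ of the $k$-subsets, and
\begin{equation*}
	\mathrm{avg\;deg}(\mathfrak{L}_k) \;=\; \frac{\bar{n}\bar{d}\binom{\bar{n}-2}{k-1}}{\binom{\bar{n}}{k}} \;=\; \bar{n}\bar{d}\,\frac{k(\bar{n}-k)}{\bar{n}(\bar{n}-1)} \;=\; \frac{\bar{d}\,k(\bar{n}-k)}{\bar{n}-1},
\end{equation*}
which uses only $2|E|=\bar{n}\bar{d}$, i.e.\ $\bar{d}$ as an average degree, and no regularity. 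The concluding inequality $\bar{d}\leq\bar{n}-1$ for a simple graph is as you state, with equality exactly for $K_{\bar{n}}$. Given this, I would present the double count as the primary argument and the cancellation via Proposition \ref{prop:size_edge_set_E_k} as a consistency check (which, as you note, it re-derives), rather than the other way around.
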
 

\end{document}